\theoremstyle{definition}
\newtheorem*{lemma*}{lemma}
\newcommand\scalemath[2]{\scalebox{#1}{\mbox{\ensuremath{\displaystyle #2}}}}
\renewcommand{\vec}[1]{\mathbf{#1}}
\titleformat*{\section}{\filcenter}
\title{Determinant of a Sum of Certain Kronecker Products}
\author{Dwight Nwaigwe}
\begin{document}

\maketitle

\begin{abstract}
We compute the determinant of $\sum_{n=1}^{N} \vec{A}^{(n)} \otimes 
\vec{B}^{(n)}$, 
where $\vec{A}^{(n)}$ is square and 
${\vec{B}^{(n)}=\vec{x}^{(n)}{\vec{y}^{(n)}}^T}$ where 
$\vec{x}^{(n)}$ and $\vec{y}^{(n)}$ have length $N$. 
\end{abstract}

\section*{Introduction}
Kronecker products appear in a variety of applications 
\cite{benner}, \cite{benzing}. For square matrices $\vec{A}, \vec{B}$ 
the 
determinant 
of the Kronecker product $\vec{A} \otimes \vec{B}$ is given by $\det(\vec{A})^N 
\det(\vec{B})^F$, where $F$ is the size of $\vec{A}$ and $N$ the size of 
$\vec{B}$ 
\cite{horn}. 
Typically there are no general formulas for more 
complicated expressions involving Kronecker products, such as sums. In this 
article we compute the determinant of a sum of certain Kronecker products as 
described in the next section.

\section*{Result}

\begin{lemma*}
Let $\{\vec{A}^{(n)}\}$ be $F$ by $F$ matrices and 
$\vec{B}^{(n)}=\vec{x}^{(n)} {\vec{y}^{(n)}}^T$ where 
$\vec{x}^{(n)}$ is the $n^{th}$ column of some $N$ by $N$ matrix $\vec{X}$, and 
$\vec{y}^{(n)}$ 
is the $n^{th}$ column of some $N$ by $N$ matrix $\vec{Y}$. Also, let

\begin{equation}
\vec{G}= \sum_{n=1}^{N} \vec{A}^{(n)} \otimes \vec{B}^{(n)}.
\end{equation}
Then, the following equation holds:

\begin{equation}
\label{eq:result}
\det(\vec{G})=\left(\prod_{n=1}^{N} \det(\vec{A}^{(n)}) \right) 
{\det(\vec{X})}^{F}{\det(\vec{Y})}^{F}.
\end{equation}
\end{lemma*}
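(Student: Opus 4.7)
The plan is to factor $\vec{G}$ into a product of three matrices whose determinants can be computed individually via standard Kronecker product identities. First I would use the column representations $\vec{x}^{(n)} = \vec{X} \vec{e}_n$ and ${\vec{y}^{(n)}}^T = \vec{e}_n^T \vec{Y}^T$ to rewrite $\vec{B}^{(n)} = \vec{X} \vec{e}_n \vec{e}_n^T \vec{Y}^T$. Then, invoking the mixed-product identity $A \otimes BCD = (I \otimes B)(A \otimes C)(I \otimes D)$, each summand becomes
\begin{equation*}
\vec{A}^{(n)} \otimes \vec{B}^{(n)} = (\vec{I}_F \otimes \vec{X})\bigl(\vec{A}^{(n)} \otimes \vec{e}_n \vec{e}_n^T\bigr)(\vec{I}_F \otimes \vec{Y}^T),
\end{equation*}
so that summing over $n$ yields the factorization $\vec{G} = (\vec{I}_F \otimes \vec{X})\, \vec{D}\, (\vec{I}_F \otimes \vec{Y}^T)$, where $\vec{D} = \sum_{n=1}^N \vec{A}^{(n)} \otimes \vec{e}_n \vec{e}_n^T$.

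Next, I would apply the Kronecker determinant formula recalled in the introduction to the outer factors, which gives $\det(\vec{I}_F \otimes \vec{X}) = \det(\vec{X})^F$ and $\det(\vec{I}_F \otimes \vec{Y}^T) = \det(\vec{Y})^F$. It then remains to prove that $\det(\vec{D}) = \prod_{n=1}^N \det(\vec{A}^{(n)})$.

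To evaluate $\det(\vec{D})$, I would conjugate by the commutation (perfect-shuffle) permutation matrix $\vec{P}$ that swaps the two Kronecker factors, obtaining $\vec{P}^T \vec{D} \vec{P} = \sum_{n=1}^N \vec{e}_n \vec{e}_n^T \otimes \vec{A}^{(n)}$. Viewed as an $N \times N$ array of blocks of size $F \times F$, the $(i,j)$ block of this matrix is $\delta_{ij}\vec{A}^{(i)}$, so it is block-diagonal with diagonal blocks $\vec{A}^{(1)}, \ldots, \vec{A}^{(N)}$. Since $\vec{P}$ is a permutation matrix, $\det(\vec{D}) = \prod_n \det(\vec{A}^{(n)})$; multiplying the three determinants then yields \eqref{eq:result}.

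I expect the main obstacle to be spotting the correct factorization in the first step: in particular, recognizing that the rank-one structure of $\vec{B}^{(n)}$ lets the entire $n$-dependence be compressed into the single matrix $\vec{e}_n \vec{e}_n^T$, so that the outer factors $\vec{I}_F \otimes \vec{X}$ and $\vec{I}_F \otimes \vec{Y}^T$ become independent of $n$ and can be pulled out of the sum. The remaining steps are routine consequences of standard Kronecker product identities.
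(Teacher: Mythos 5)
Your proof is correct, and it takes a genuinely different---and considerably more direct---route than the paper. The paper expands $\det(\vec{G})$ by multilinearity over the columns, indexes the surviving terms by tuples $(\gamma^1,\dots,\gamma^F)\in (S_N)^F$, evaluates the block-diagonal pieces by the Leibniz rule, and then argues that the off-block-diagonal contributions can be folded in ``up to sign,'' a reduction that is only sketched there. You instead factor the matrix itself, $\vec{G}=(\vec{I}_F\otimes\vec{X})\,\vec{D}\,(\vec{I}_F\otimes\vec{Y}^T)$ with $\vec{D}=\sum_{n}\vec{A}^{(n)}\otimes\vec{e}_n\vec{e}_n^T$, via the mixed-product property, after which only two standard facts are needed: $\det(\vec{I}_F\otimes\vec{M})=\det(\vec{M})^F$, and that $\vec{D}$ is permutation-similar to $\vec{A}^{(1)}\oplus\cdots\oplus\vec{A}^{(N)}$ (your commutation-matrix conjugation is valid since the perfect shuffle depends only on the dimensions $F$ and $N$ and so distributes over the sum; equivalently, read as an $F\times F$ array of $N\times N$ blocks, the $(i,j)$ block of $\vec{D}$ is the diagonal matrix $\mathrm{diag}(\vec{A}^{(1)}_{ij},\dots,\vec{A}^{(N)}_{ij})$, and one reshuffle block-diagonalizes it). What your approach buys: every step is a named identity, there is no combinatorial bookkeeping over $(S_N)^F$ and no deferred sign analysis, and the factorization of $\vec{G}$ is strictly stronger information than the determinant formula (it immediately gives the invertibility criterion and an explicit inverse when all factors are nonsingular). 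The paper's expansion, by contrast, makes visible how the rank-one structure of the $\vec{B}^{(n)}$ kills most terms of the multilinear expansion, but at the cost of a substantially longer and less airtight argument.
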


\begin{proof}

$\det(\vec{G})$ is a multi-linear function of its columns, therefore it can be 
expressed a sum of determinants where each determinant corresponds to a 
particular arrangement of columns belonging to the various $\vec{A}^{(n)} 
\otimes \vec{B}^{(n)}$.  Denote the set of such corresponding matrices as 
$\mathcal{O}$. Then $\det(\vec{G})= \sum_{\vec{C} \in \mathcal{O}} 
\det(\vec{C})$.  Because 
$\vec{B}^{(n)}$ is rank 1, only certain matrices contribute to the sum and as a 
result we can redefine $\mathcal{O}$ to consist of these matrices. Let $\vec{C} 
\in \mathcal{O}$.  The column entries of $\vec{C}$ are such that: each of the 
first 
$N$ columns come from a different $\vec{A}^{(n)} \otimes \vec{B}^{(n)}$;  each 
of the columns $N+1$ to $2N$ come from a different 
$\vec{A}^{(n)} \otimes \vec{B}^{(n)}$; and so forth for the 
$(N-1)F+1$ to $NF$ columns. For this reason, let $S_N$ be the permutation group 
on $N$ elements, and let $(\gamma^1,\gamma^2,\dots \gamma^F) \in (S_N)^F$. 
Every $\vec{C} \in \mathcal{O}$ 
can be identified by some $(\gamma^1,\gamma^2,\dots \gamma^F)$. By definition 
of $\gamma^k$, we have that $\vec{A}^{\gamma^k(i)}=\vec{A}^{(n)}$ for some $i$, 
and $n$, and 
$\vec{B}^{\gamma^k(i)}=\vec{B}^{(n)}$ for some $i$ and $n$. It is possible 
that $(\gamma^1,\gamma^2,\dots 
\gamma^F)$ may not have distinct elements. Also, we now write 
$\vec{C}^{ \scaleto{\left(\gamma^1,\gamma^2,\dots 
\gamma^F\right)}{8pt} }$ instead of $\vec{C}$ to emphasize the identification 
of $\vec{C} \in \mathcal{O}$ with $(S_N)^F$.

For a given  $\vec{C}^{ \scaleto{\left(\gamma^1,\gamma^2,\dots 
\gamma^F\right)}{8pt} } \in \mathcal{O}$, it is sufficient to only consider 
the case of a block diagonal structure since contributions from the non-block 
diagonal terms in $\vec{C}^{ \scaleto{\left(\gamma^1,\gamma^2,\dots 
		\gamma^F\right)}{8pt} }$ can be expressed as contributions from a block 
diagonal 
matrix up to sign. For the purpose of the proof, we thus only calculate 
contributions from 
the block diagonal terms. Let $\vec{C}_{diag}^{ 
\scaleto{\left(\gamma^1,\gamma^2,\dots \gamma^F\right)}{8pt} }$ be the 
submatrix of  $\vec{C}^{ 
	\scaleto{\left(\gamma^1,\gamma^2,\dots \gamma^F\right)}{8pt} }$ determined 
	by replacing the asterisks with 0's in the 
below 
matrix: We now calculate 
$\det(\vec{C}_{diag}^{\scaleto{\left(\gamma^1,\gamma^2,\dots 
		\gamma^F\right)}{8pt} })$ using the 
Leibniz rule.  

\begin{figure}
\[
\hspace*{-2cm} 
\scalemath{0.8}{
\left[ 
\begin{array}{c@{}c@{}c}
\left[\begin{array}{cccc}  
\vec{A}^{\gamma^1(1)}_{11} \vec{B}^{\gamma^1(1)}_{11} & 
\vec{A}^{\gamma^1(2)}_{11} \vec{B}^{\gamma^1(2)}_{12} &\ldots & 
\vec{A}^{\gamma^1(n)}_{11}  
\vec{B}^{\gamma^1(n)}_{1N}  \\
\vec{A}^{\gamma^1(1)}_{11} \vec{B}^{\gamma^1(1)}_{21} & 
\vec{A}^{\gamma^1(2)}_{11} \vec{B}^{\gamma^1(2)}_{22} &\ldots & 
\vec{A}^{\gamma^1(n)}_{11}  
\vec{B}^{\gamma^1(n)}_{2N}  \\
\vdots & \vdots &\ddots &  \\
\vec{A}^{\gamma^1(1)}_{11} \vec{B}^{\gamma^1(1)}_{N1} & 
\vec{A}^{\gamma^1(2)}_{11} \vec{B}^{\gamma^1(2)}_{N2} &\ldots & 
\vec{A}^{\gamma^1(n)}_{11}  
\vec{B}^{\gamma^1(n)}_{NN}  \\
\end{array}\right]    & \mathbf{*} & \mathbf{*} \\
\mathbf{*} & \left[\begin{array}{cccc}
\vec{A}^{\gamma^2(1)}_{22} \vec{B}^{\gamma^2(1)}_{11} & 
\vec{A}^{\gamma^2(2)}_{22} \vec{B}^{\gamma^2(2)}_{12} &\ldots & 
\vec{A}^{\gamma^2(n)}_{22}  
\vec{B^}{\gamma^2(n)}_{1N}  \\
\vec{A}^{\gamma^2(1)}_{22} \vec{B}^{\gamma^2(1)}_{21} & 
\vec{A}^{\gamma^2(2)}_{22} \vec{B}^{\gamma^2(2)}_{22} &\ldots & 
\vec{A}^{\gamma^2(n)}_{22}  
\vec{B}^{\gamma^2(n)}_{2N}  \\
\vdots & \vdots &\ddots &  \\
\vec{A}^{\gamma^2(1)}_{22} \vec{B}^{\gamma^2(1)}_{N1} & 
\vec{A}^{\gamma^2(2)}_{22} \vec{B}^{\gamma^2(2)}_{N2} &\ldots & 
\vec{A}^{\gamma^2(n)}_{22}  
\vec{B}^{\gamma^2(n)}_{NN}  \\
\end{array}\right]  & \mathbf{*}\\
\mathbf{*} & \mathbf{*} & \left[\begin{array}{cccc}
\vec{A}^{\gamma^2(1)}_{NN} \vec{B}^{\gamma^2(1)}_{11} & 
\vec{A}^{\gamma^2(2)}_{NN} \vec{B}^{\gamma^2(2)}_{12} &\ldots & 
\vec{A}^{\gamma^2(n)}_{NN}  
\vec{B}^{\gamma^2(n)}_{1N}  \\
\vec{A}^{\gamma^2(1)}_{NN} \vec{B}^{\gamma^2(1)}_{21} & 
\vec{A}^{\gamma^2(2)}_{NN} \vec{B}^{\gamma^2(2)}_{22} &\ldots & 
\vec{A}^{\gamma^2(n)}_{NN}  
\vec{B}^{\gamma^2(n)}_{2N}  \\
\vdots & \vdots &\ddots &  \\
\vec{A}^{\gamma^2(1)}_{NN} \vec{B}^{\gamma^2(1)}_{N1} & 
\vec{A}^{\gamma^2(2)}_{NN}\vec{B}^{\gamma^2(2)}_{N2} &\ldots & 
\vec{A}^{\gamma^2(n)}_{NN}  
\vec{B}^{\gamma^2(n)}_{NN}  \\
\end{array}\right] 
\end{array}\right] } \\
\]
\caption{In computing the determinant in our proof, we only do so for the 
blocks on diagonals. The 
off-diagonal terms can be put into a block diagonal form up to sign and 
accounted for later.}
\end{figure}

If $\{\vec{G_i}\}$ are the diagonals of a block diagonal matrix, then $ \det 
\left( \vec{G_ 1}\bigoplus 
\vec{G_2} \bigoplus \ldots \vec{G_M} \right) = \prod_{i=1}^{M} \det(\vec{G_i})  
$. Applying this 
to $\det( \vec{C}_{diag}^{ \scaleto{\left(\gamma^1,\gamma^2,\dots 
\gamma^F\right)}{8pt} })$ gives


\begin{align}
 \label{eq:C_diag}
\hspace*{-3.5cm} 
& \det(\vec{C}_{diag}^{ \scaleto{\left(\gamma^1,\gamma^2,\dots 
\gamma^F\right)}{8pt} 
}) \\
& = \prod_{i=1}^{F} det(\bf{block}_i) \\
& = det\left(  
(\vec{A}^{\gamma^1(1)}_{11} \vec{A}^{\gamma^1(2)}_{11} \dots 
\vec{A}^{\gamma^1(N)}_{11}) 
[\vec{B}^{\gamma^1(1)}_{1} \vec{B}^{\gamma^1(2)}_{2}\dots 
\vec{B}^{\gamma^1(N)}_{N} ]         
\right) \boldsymbol{\cdot} \nonumber \\  
& \quad det\left(  (\vec{A}^{\gamma^2(1)}_{11} \vec{A}^{\gamma^2(2)}_{11} \dots 
\vec{A}^{\gamma^2(N)}_{11}) [\vec{B}^{\gamma^2(1)}_{1} 
\vec{B}^{\gamma^2(2)}_{2} \dots 
\vec{B}^{\gamma^2(N)}_{N} ]  \right) \boldsymbol{\cdot}  \\
& \vdots \\
& \quad det\left(  
(\vec{A}^{\gamma^F(1)}_{11} \vec{A}^{\gamma^F(2)}_{11} \dots  
\vec{A}^{\gamma^F(N)}_{11}) 
[\vec{B}^{\gamma^F(1)}_{1} \vec{B}^{\gamma^F(2)}_{2} \dots 
\vec{B}^{\gamma^F(N)}_{N} ]         
\right) \nonumber \\
& =(\vec{A}_{11}^1 \vec{A}_{11}^2 \dots \vec{A}_{11}^F) (\vec{A}_{22}^1 
\vec{A}_{22}^2 \dots \vec{A}_{22}^F) \dots 
(\vec{A}_{FF}^1 \vec{A}_{FF}^2  \dots \vec{A}_{FF}^F)\prod_{i=1}^{F} 
\det\left([\vec{B}^{\gamma^i(1)}_{1}  \vec{B}^{\gamma^i(2)}_{2} \dots 
\vec{B}^{\gamma^i(N)}_{N}] 
\right)
\end{align}
where $[\vec{B}^{\gamma^i(1)}_{1} 
\vec{B}^{\gamma^i(2)}_{2}\dots \vec{B}^{\gamma^i(N)}_{N} ]$ 
denotes the matrix composed of the column vectors $\vec{B}^{\gamma^i(k)}_{j}$, 
$1 
\leq j \leq N$, with $j$ referring to column index. We have that

\begin{eqnarray}
 \label{eq:detb}
det\left( [ \vec{B}^{\gamma^i(1)}_{1} 
\vec{B}^{\gamma^i(2)}_{2}\dots \vec{B}^{\gamma^i(N)}_{N} ] 
\right) = \left(\vec{Y}_{1 \gamma^i(1)} \vec{Y}_{2 \gamma^i(2)}\dots \vec{Y}_{N 
\gamma^i(N)} 
\right)  
\det(\vec{Z}), 
\end{eqnarray}
where 
\[
\det(\vec{Z})=
\begin{vmatrix} 
\vec{X}_{1\gamma^i(1)} & \vec{X}_{1\gamma^i(2)} & \dots \vec{X}_{1\gamma^i(N)} 
\\
\vec{X}_{2\gamma^i(1)} & \vec{X}_{2\gamma^i(2)} & \dots \vec{X}_{2\gamma^i(N)} 
\\
\vdots & \vdots & \ddots & \\
\vec{X}_{N\gamma^i(1)} & \vec{X}_{N\gamma^i(2)} & \dots \vec{X}_{N\gamma^i(N)} 
\\
\end{vmatrix}
.\]
Clearly,

\begin{equation}
\det(\vec{Z})=\det(\vec{X}) sgn(\gamma^i) 
\end{equation}
which combined with \eqref{eq:detb} gives 

\begin{align}
\label{eq:proddet}
 \prod_{i=1}^{F} \det[\vec{B}^{\gamma^i(1)}_{1}  \vec{B}^{\gamma^i(2)}_{2} 
 \dots 
 \vec{B}^{\gamma^i(N)}_{N}] 
 = (\det(\vec{X}))^F  \prod_{i=1}^{F} (\vec{Y}_{1 \gamma^i(1)} \vec{Y}_{2 
 \gamma^i(2)} \dots  
 \vec{Y}_{N \gamma^i(N)} ) sgn(\gamma^i). 
\end{align} 
Plugging in \eqref{eq:proddet} into \eqref{eq:C_diag} gives

\begin{align}
\det(\vec{C}_{diag}^{ \scaleto{\left(\gamma^1,\gamma^2,\dots 
\gamma^F\right)}{8pt} }) 
\\
&= (\vec{A}_{11}^1 A_{11}^2 \dots \vec{A}_{11}^F) (\vec{A}_{22}^1 
\vec{A}_{22}^2 \dots \vec{A}_{22}^F) \dots 
(\vec{A}_{FF}^1 \vec{A}_{FF}^2 \dots \vec{A}_{FF}^F) (\det(\vec{X}))^F 
\boldsymbol{\cdot} \nonumber \\
& \prod_{i=1}^{F} (\vec{Y}_{1 \gamma^i(1)} \vec{Y}_{2 \gamma^i(2)} \dots 
\vec{Y}_{N \gamma^i(N)} ) sgn(\gamma^i).
\end{align}
To get the contributions from all block-diagonal matrices, we must sum over all 
possible sequences of $(\gamma^1,\gamma^2,\dots \gamma^F) \in (S_N)^F$. Doing 
this we have 

\begin{align}
\label{eq:sumdiag}
 \sum_{ (\gamma^1,\gamma^2,\dots \gamma^F) \in (S_N)^F  } \det(\vec{C}_{diag}^{ 
\scaleto{\left(\gamma^1,\gamma^2,\dots \gamma^F\right)}{8pt} }) \\ \nonumber  
&= (\vec{A}_{11}^1 
\vec{A}_{11}^2 \dots \vec{A}_{11}^F) (\vec{A}_{22}^1 \vec{A}_{22}^2 \dots 
\vec{A}_{22}^F) \dots (\vec{A}_{FF}^1 
\vec{A}_{FF}^2 \dots \vec{A}_{FF}^F) (\det(\vec{X}))^F \boldsymbol{\cdot} \\ 
\nonumber  & 
\nonumber 
\left( \sum_{ (\gamma^1,\gamma^2,\dots \gamma^F) \in (S_N)^F  } 
\prod_{i=1}^{F} (\vec{Y}_{1 \gamma^i(1)} \vec{Y}_{2 \gamma^i(2)} \dots 
\vec{Y}_{N \gamma^i(N)} ) 
sgn(\gamma^i)\right). 
\end{align}
However, the term $\left( \sum_{ (\gamma^1,\gamma^2,\dots\gamma^F) \in (S_N)^F  
} \prod_{i=1}^{F} (\vec{Y}_{1 \gamma^i(1)} \vec{Y}_{2 \gamma^i(2)} \dots 
\vec{Y}_{N \gamma^i(N)} 
) sgn(\gamma^i) \right)$ is equal to $(\det(\vec{Y}))^F  $, so that 
\eqref{eq:sumdiag} 
becomes

\begin{align}
\label{total-off}
 \sum_{ (\gamma^1,\gamma^2,\dots \gamma^F) \in (S_N)^F  } \det(\vec{C}_{diag}^{ 
 \scaleto{\left(\gamma^1,\gamma^2,\dots \gamma^F\right)}{8pt} })\\
 & = (\vec{A}_{11}^1 \vec{A}_{11}^2 \dots \vec{A}_{11}^F) (\vec{A}_{22}^1 
 \vec{A}_{22}^2 \dots \vec{A}_{22}^F) 
 \dots  (\vec{A}_{FF}^1 \vec{A}_{FF}^2 \dots \vec{A}_{FF}^F) 
 (\det(\vec{X}))^{F} (\det(\vec{Y}))^{F} \nonumber
\end{align}
Adding the contributions from the non-block diagonal terms to 
\eqref{total-off}, \eqref{eq:result} is proved.
\end{proof}

\bibliographystyle{plain}
\bibliography{proof_of_first_half_of_determinant_theorem}

\end{document}